\documentclass[11pt,reqno,oneside]{amsart}

\usepackage[a4paper, margin=1.5cm]{geometry}

\usepackage{graphicx}
\usepackage{amssymb}
\usepackage{epstopdf}
\usepackage{cases}
\usepackage{hyperref}
\usepackage[abbrev,nobysame]{amsrefs} \AtBeginDocument{\def\MR#1{}} % to suppress the MR number in the citation

\usepackage{amsmath,amsfonts,amsthm,mathrsfs,amssymb,cite}
\usepackage[usenames]{color}

\usepackage{empheq}
\newcommand*\widefbox[1]{\fbox{\hspace{2em}#1\hspace{2em}}}

\newtheorem{lem}{Lemma}[section]
\newtheorem{prop}{Proposition}[section]
\theoremstyle{definition}

\theoremstyle{remark}

\newtheorem{rem}{Remark}[section]
\numberwithin{equation}{section}

\numberwithin{equation}{section}

\usepackage{tikz}
\allowdisplaybreaks

\newcommand{\R}{\mathbb{R}} \newcommand{\mathR}{\mathbb{R}}
\newcommand{\Rn}{{\mathR^n}}

\newcommand{\N}{\mathbb{N}}

\newcommand{\calF}{\mathcal{F}}

\newcommand{\supp}{\mathop{\rm supp}}

\newcommand{\dif}[1]{\,\mathrm{d}{#1}} % use: \dif{x}
 % use: \diff{x}{t}
\newcommand{\nrm}[2][]{ \| {#2} \|_{#1}}
\newcommand{\agl}[1][\cdot]{ \langle {#1} \rangle}
\newcommand{\comment}[1]{} 		% How to use: just try ``\comment{ }'', then things within the brace will be commented
	% How to use: just try ``\uncomment{ }'', then things within the brace will be uncommented

\usepackage{tikz}

%\newcounter{saveeqn}
%\newcommand{\subeqn}{\setcounter{saveeqn}{\value{equation}}%
%	\stepcounter{saveeqn}\setcounter{equation}{0}% =
%	\renewcommand{\theequation}{\mbox{\arabic{section}.\arabic{saveeqn}\alph{=
%				equation}}}} %\alph, \roman
%\newcommand{\reseteqn}{\setcounter{equation}{\value{saveeqn}}%
%	\renewcommand{\theequation}{\arabic{section}.\arabic{equation}}}

\newcommand{\sgn}{\mathop{\rm sgn}}

\title{Stationary phase lemmas for general functions}
\author{Shiqi Ma}
\address{Department of Mathematics and Statistics, University of Jyv\"askyl\"a}
\email{mashiqi01@gmail.com, shiqi.s.ma@jyu.fi}
\date{September 13, 2020}

\begin{document}

\begin{abstract}
    We give some details about the stationary phase lemma. We first prove a special case where the high order terms are derived explicitly. Based on that, we prove a more general case by using Morse lemma.
\end{abstract}

\maketitle

%\noindent Conventions: \\
%$\calD(\Rn) = C_c^{\infty}(\Rn;\R)$. Denote the Schwartz space as $\scrS(\Rn)$. Define $\calR{f(x)} := f(-x)$. The Fourier transform is defined as:
%$$\hat{f}(\xi) = (2\pi)^{-n/2} \int_{\Rn} e^{-i x \cdot \xi}f(x) \dif{x}.$$

\section{Lemma Statements}

\begin{lem} \label{lem:2-staphlema}
	Let $n \in \N^+$ be the dimension.
	We consider the integral $I(\lambda)$:
	\[
	I(\lambda) = \int_{\Rn} e^{i\lambda \agl[Q(x - x_0), x - x_0]/2} a(x;\lambda) \dif{x},
	\]
	where $\agl[Q x, y]$ signifies $(Qx)^T \cdot y$ as matrix multiplication.
	For arbitrary integers $M$, $N \in \mathbb N$, assume
	\begin{itemize}
	
	\item for each $\lambda$, $a(\cdot;\lambda) \in C^{n+2N+3}(\Rn;\mathbb C)$;
	
	\item for each $\lambda$, $a(\cdot;\lambda) \in C^{M}(\Rn;\mathbb C)$, and $\forall \alpha : |\alpha| \leq M$, there exists $\lambda$-dependent constants $C_{M,\alpha}(\lambda) > 0$ such that $\forall x \in \Rn$ there holds $|\partial_x^\alpha a(x - x_0; \lambda)| < C_{M,\alpha}(\lambda) \agl[x]^{2M-n-1-|\alpha|}$;
	
	\item $Q$ is a non-degenerate, symmetric, real-valued matrix.
	
	\end{itemize}
	Then the integral $I(\lambda)$ is well-defined in the oscillatory integral sense, and as $\lambda \to +\infty$ we have
	\begin{subequations} \label{eq:IQ-staphlema}
	\begin{empheq}[box=\widefbox]{align}
	I(\lambda)
	& = \left( \frac{2\pi}{\lambda} \right)^{n/2} \frac{e^{i\frac{\pi}{4}\sgn Q}} {|\det Q|^{1/2}} \sum_{0 \leq j \leq N} \frac {\lambda^{-j}} {j!} \left( \frac {\agl[Q^{-1}D, D]} {2i} \right)^j a(x_0; \lambda) \nonumber \\
	& \qquad + \mathcal O \big( \lambda^{-\frac{n}{2}-N-1} \times \sum_{|\alpha| \leq n+2N+3} \sup_{B(x_0,1)} |\partial^\alpha a(\cdot; \lambda)| \big) \tag{\ref{eq:IQ-staphlema}} \\
	& \qquad + \mathcal O \big( \lambda^{-M} \times \sum_{|\alpha| \leq M} \sup_{\Rn} \frac {|\partial^\alpha a(x - x_0; \lambda)|} {\agl[x]^{2M-n-1-|\alpha|}} \big), \nonumber
	\end{empheq}
	\end{subequations}
	where $B(x_0,1)$ stands for the ball centered at $x_0$ with radius 1, and $\sgn Q$ stands for the difference between the number of positive eigenvalues and the number of negative eigenvalues of the matrix $Q$. 
	Especially, when $M$ is chosen to be within the interval $[\max\{ \lceil n/2 \rceil + N + 1, n+1\} , n + 2N + 3]$, \eqref{eq:IQ-staphlema} can be simplified to
	\begin{equation} \label{eq:IQs-staphlema}
	\boxed{I(\lambda)
	= \left( \frac{2\pi}{\lambda} \right)^{n/2} \frac{e^{i\frac{\pi}{4}\sgn Q}} {|\det Q|^{1/2}} \sum_{0 \leq j \leq N} \frac {\lambda^{-j}} {j!} \left( \frac {\agl[Q^{-1}D, D]} {2i} \right)^j a(x_0; \lambda) + \mathcal O \big( \lambda^{-\frac{n}{2}-N-1} \sum_{|\alpha| \leq n+2N+3} \sup_{\Rn} |\partial^\alpha a(\cdot; \lambda)| \big).}
	\end{equation}
\end{lem}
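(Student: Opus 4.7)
The plan is to split $I(\lambda)$ into a local piece concentrated near the critical point $x_0$ and a far-field piece, and handle them by complementary techniques. After a trivial translation making $x_0 = 0$, choose a cutoff $\chi\in C_c^\infty(\R^n)$ with $\chi\equiv 1$ on $B(0,1/2)$ and $\supp\chi\subset B(0,1)$, and write $I(\lambda) = I_1(\lambda)+I_2(\lambda)$ where $I_1$ carries the amplitude $\chi a$ and $I_2$ carries $(1-\chi)a$. The piece $I_1$ has compactly supported amplitude, so I invoke the compactly-supported-amplitude version of the stationary phase lemma for quadratic phases (the ``special case'' advertised in the abstract, which only uses the $C^{n+2N+3}$ regularity of the amplitude). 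This yields both the explicit main sum $\sum_{0\leq j\leq N}\cdots$ and the localized remainder $\mathcal{O}\bigl(\lambda^{-n/2-N-1}\sum_{|\alpha|\leq n+2N+3}\sup_{B(x_0,1)}|\partial^\alpha a|\bigr)$. Since $\chi\equiv 1$ near $x_0$, the functions $\chi a$ and $a$ agree to infinite order at $x_0$, so the differential-operator expansion evaluated at $x_0$ is identical for both.

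For $I_2(\lambda)$, since $\nabla\phi(x) = Q(x-x_0)$ is bounded away from zero on $\supp(1-\chi)\subset\{|x-x_0|\geq 1/2\}$, introduce the first-order transport operator
\[
L = \frac{1}{i\lambda}\,\frac{\langle Q(x-x_0),\nabla_x\rangle}{|Q(x-x_0)|^{2}},\qquad L\,e^{i\lambda\phi}=e^{i\lambda\phi},
\]
and integrate by parts $M$ times to obtain $I_2(\lambda) = \int e^{i\lambda\phi}(L^t)^M[(1-\chi)a]\,dx$. A routine induction shows that $(L^t)^M[(1-\chi)a]$ is a finite sum of terms of the form $\lambda^{-M}c_\alpha(x)\,\partial^\alpha a(x)\,\partial^\beta[(1-\chi)(x)]$ with $|\alpha|+|\beta|\leq M$, where the smooth coefficients $c_\alpha$ satisfy $|c_\alpha(x)|\lesssim |x-x_0|^{-(2M-|\alpha|)}$ away from $x_0$. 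Terms with $|\beta|\geq 1$ are supported in the annulus $B(x_0,1)\setminus B(x_0,1/2)$ and are absorbed into the first error term; for the remaining terms the decay hypothesis $|\partial^\alpha a|\lesssim\langle x\rangle^{2M-n-1-|\alpha|}$ majorizes the integrand by a constant multiple of $\lambda^{-M}\langle x\rangle^{-n-1}$, which is integrable on $\R^n$ and reproduces exactly the second error term in \eqref{eq:IQ-staphlema}.

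Combining these two estimates gives \eqref{eq:IQ-staphlema}. For the simplified version \eqref{eq:IQs-staphlema}, observe that the constraint $M\geq n+1$ forces $2M-n-1-|\alpha|\geq M-n-1\geq 0$ whenever $|\alpha|\leq M$, so $\langle x\rangle^{2M-n-1-|\alpha|}\geq 1$ and the weighted supremum is dominated by $\sup_{\R^n}|\partial^\alpha a|$; the additional requirement $M\geq\lceil n/2\rceil+N+1$ then yields $\lambda^{-M}\leq\lambda^{-n/2-N-1}$, absorbing the integration-by-parts error into the previous one, while $M\leq n+2N+3$ is exactly what the $C^{n+2N+3}$ regularity hypothesis allows. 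The principal technical obstacle is the inductive bookkeeping for $(L^t)^M$, namely verifying that the coefficients $c_\alpha$ remain smooth off $x_0$ and do not grow worse than $|x-x_0|^{-(2M-|\alpha|)}$; well-definedness of $I(\lambda)$ in the oscillatory-integral sense follows from applying the same computation after regularization by a cutoff $\eta(\varepsilon x)$ and taking $\varepsilon\to 0^+$, the limit being independent of the choice of $\eta$.
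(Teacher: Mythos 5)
Your decomposition and your treatment of the far-field piece match the paper's proof almost exactly: after translating $x_0$ to the origin, the paper also splits off $(1-\chi)a$ and integrates by parts $M$ times with a transport operator whose iterates produce coefficients of size $|y|^{|\alpha|-2M}$, then uses the weighted decay hypothesis to majorize the integrand by $\lambda^{-M}\langle y\rangle^{-n-1}$ (see \eqref{eq:J1Ind-staphlema}--\eqref{eq:J1f-staphlema}). Your verification of the passage from \eqref{eq:IQ-staphlema} to \eqref{eq:IQs-staphlema} under $\max\{\lceil n/2\rceil+N+1,\,n+1\\}\le M\le n+2N+3$ is also correct and is the same as the paper's.

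The genuine gap is your handling of the local piece $I_1$. You ``invoke the compactly-supported-amplitude version of the stationary phase lemma for quadratic phases (the special case advertised in the abstract)'' --- but that compactly supported quadratic-phase expansion \emph{is} the substance of the lemma you are asked to prove; it is essentially Proposition \ref{prop:2-staphlema} localized to $\supp\chi$, and the paper's entire purpose is to supply its proof in detail. Citing it is circular in this context. Concretely, what is missing is the derivation of the explicit terms $\frac{\lambda^{-j}}{j!}\bigl(\frac{\langle Q^{-1}D,D\rangle}{2i}\bigr)^j a(x_0)$ together with a remainder controlled by exactly $\sum_{|\alpha|\le n+2N+3}\sup_{B(x_0,1)}|\partial^\alpha a|$. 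The paper obtains this by diagonalizing $Q=P\Lambda P^T$, applying Plancherel together with the explicit Fourier transform \eqref{eq:FrGauiQ-staphlema} of the imaginary Gaussian, Taylor-expanding the resulting function $J(h_1,h_2,\chi f)$ in $h_1=1/\lambda$ at $0$ (which is where the operators $\langle Q^{-1}D,D\rangle^j$ appear), and bounding the integral remainder via the $L^1$ Fourier estimate \eqref{eq:Fuu-staphlema}; that last estimate is precisely what produces the derivative count $n+2N+3=(n+1)+(2N+2)$ matching the regularity hypothesis. Without some version of this argument (or an equally explicit alternative), the main term and the first error term in \eqref{eq:IQ-staphlema} are asserted rather than proved, and there is no justification that the stated regularity $a\in C^{n+2N+3}$ suffices.
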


\begin{rem} \label{rem:1-staphlema}
	The first thing to note is that, in contrast to many other versions of the stationary phase lemma, here we don't require $a$ to be compactly supported.
	Instead, some boundedness conditions are required, which generates the compact-support condition.	
	Second, the integers $M$ and $N$ in Lemma \ref{lem:2-staphlema} shall be chosen properly to serve for your own purposes.
	For example, if one cares more about the decaying behavior w.r.t.~$\lambda$, then the $M$ can be set to $\lceil n/2 \rceil +N+1$.
	However, if one is dealing with these functions $a$ which doesn't have good decaying behavior at the infinity, then one could set $M$ to be large enough such that $\agl[x]^{2M-n-1-|\alpha|}$ can dominate $\partial^\alpha a$, with the cost that we should demand more smoothness of $a(x)$.
	Third, the unit ball $B(x_0,1)$ involved in the term $\sup_{B(0,1)} |\partial^\alpha a|$ can be changed to other bounded domain containing the origin.
	But one should be careful that when the domain you want to use has very small radius, the underlying coefficients of the $\mathcal O(\cdot)$ notation will be very large accordingly.
	Fourth, the function $a$ can depend on $\lambda$, hence the expression \eqref{eq:IQ-staphlema} is an asymptotics only when $\partial_x^\alpha a(x;\lambda)$ don't change significantly when $\lambda \to +\infty$.
\end{rem}

If chosen $M = n + 2N + 3$, Lemma \ref{lem:2-staphlema} can be simplified as follows.

\begin{prop} \label{prop:2-staphlema}
	Assume $I(\lambda)$ and $Q$ are as stated in Lemma \ref{lem:2-staphlema}. If $a \in C^{n+2N+3}(\Rn;\mathbb C)$, and for $\forall \alpha : |\alpha| \leq n+2N+3$ there holds $\sup_\Rn |\partial^\alpha a(x)| < +\infty$, then \eqref{eq:IQs-staphlema} holds.
\end{prop}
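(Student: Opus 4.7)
The plan is to derive this as a direct specialization of Lemma \ref{lem:2-staphlema} with the choice $M = n+2N+3$, so essentially all the real analytic work has already been done; what remains is to check the bookkeeping, namely (i) that this $M$ lies in the interval in which the simplified conclusion \eqref{eq:IQs-staphlema} is claimed to hold, and (ii) that the hypotheses of the lemma are satisfied with constants that are controlled by $\sup_{\R^n}|\partial^\alpha a|$.

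First I would verify the interval condition: with $M = n+2N+3$ one has $M - (n+1) = 2N+2 \ge 0$ and $M - (\lceil n/2\rceil + N + 1) = (n - \lceil n/2\rceil) + N + 2 \ge N+2 > 0$, so $M \in [\max\{\lceil n/2\rceil+N+1,\ n+1\},\ n+2N+3]$, and therefore \eqref{eq:IQs-staphlema} is the form of the conclusion available from Lemma \ref{lem:2-staphlema}. The smoothness hypothesis $a(\cdot;\lambda)\in C^{n+2N+3}$ is directly assumed, so both the first and second bullets of Lemma \ref{lem:2-staphlema} are met once the growth bound is verified.

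The key observation is that for $|\alpha|\le M = n+2N+3$ the exponent satisfies $2M - n - 1 - |\alpha| \ge 2N+2 > 0$, so $\agl[x]^{2M-n-1-|\alpha|} \ge 1$ on $\R^n$. Consequently, translation invariance of the sup gives
\[
|\partial_x^\alpha a(x-x_0;\lambda)| \le \sup_{\R^n}|\partial^\alpha a(\cdot;\lambda)| \le \Bigl(\sup_{\R^n}|\partial^\alpha a(\cdot;\lambda)|\Bigr)\agl[x]^{2M-n-1-|\alpha|},
\]
which furnishes the growth hypothesis with $C_{M,\alpha}(\lambda) = \sup_{\R^n}|\partial^\alpha a(\cdot;\lambda)|$, finite by assumption. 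Applying Lemma \ref{lem:2-staphlema} then produces the asymptotic expansion together with the two remainders in \eqref{eq:IQ-staphlema}. The only remaining point is to absorb them. The $\sup_{B(x_0,1)}$ in the first remainder is bounded by $\sup_{\R^n}$; and the second remainder is of order $\lambda^{-M} = \lambda^{-(n+2N+3)}$, whose exponent is strictly less than $-\tfrac{n}{2}-N-1$ (since $\tfrac{n}{2}+N+2>0$), while the sup inside is again bounded by $\sup_{\R^n}|\partial^\alpha a|$ because $\agl[x]^{2M-n-1-|\alpha|}\ge 1$. Hence the second remainder is absorbed into the first, yielding precisely \eqref{eq:IQs-staphlema}.

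There is no genuine obstacle here beyond a careful verification of the inequalities on $M$; the substantive analytic content (Morse reduction, the explicit expansion with the differential operator $\langle Q^{-1}D,D\rangle/(2i)$, and the corresponding remainder estimates) is already contained in Lemma \ref{lem:2-staphlema}. The slight nuance to watch for is not to forget the translation by $x_0$ when transferring $\sup$ estimates, which is harmless since Lebesgue measure and the pointwise sup are translation invariant.
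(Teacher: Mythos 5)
Your proposal is correct and follows exactly the route the paper intends: the proposition is just Lemma \ref{lem:2-staphlema} with $M=n+2N+3$, which lies in the stated interval, and the uniform bounds $\sup_{\Rn}|\partial^\alpha a|<\infty$ supply the growth hypothesis since $\agl[x]^{2M-n-1-|\alpha|}\ge 1$ for $|\alpha|\le M$. The bookkeeping (interval check, absorbing the $\lambda^{-M}$ remainder into the $\lambda^{-n/2-N-1}$ one) is all verified correctly.
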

Proposition \ref{prop:2-staphlema} can be extend to a more general case where the phase function is not quadratic.

\begin{lem}[Stationary phase lemma \cite{eskin2011lectures}] \label{lem:1-staphlema}
    We consider the integral $I(\lambda)$:
    \[
    I(\lambda) = \int_{\Rn} a(x) e^{i\lambda \varphi(x)} \dif{x}.
    \]
    For an arbitrary integer $N \in \mathbb N$, assume
    \begin{itemize}
   	\item $a \in C^{n+2N+3}(\Rn;\mathbb C)$ with $\sum_{|\alpha| \leq n + 2N+3} \sup_{\Rn} |\partial^\alpha a| < +\infty$;
   	
   	\item $\varphi \in C^{n+2N+6}(\Rn;\R)$ with $\sum_{|\alpha| \leq n+2N+6} \sup_{\Rn} |\partial^\alpha \varphi| < +\infty$;
   	
   	\item $x_0$ is the only critical point of $\varphi(x)$ on $\supp a(x)$, i.e., $\varphi(x_0) = \nabla \varphi(x_0) = 0$, $\varphi_x(x) \neq 0$ for $x \neq x_0$;
   	
   	\item the Hessian $\varphi_{xx}(x_0) := [\frac{\partial^2\varphi}{\partial x_j \partial x_k}(x_0)]_{j,k=1}^n$ satisfies $\det \varphi_{xx}(x_0) \neq 0$.
    \end{itemize}
Then the integral $I(\lambda)$ is well-defined in the oscillatory integral sense, and as $\lambda \to +\infty$ we have
    \begin{subequations} \label{eq:I-staphlema}
    \begin{empheq}[box=\widefbox]{align}
    I(\lambda)
    & = \left( \frac{2\pi}{\lambda} \right)^{n/2} \frac{e^{i\lambda\varphi(x_0) + i\frac{\pi}{4}\sgn \varphi_{xx}(x_0)}}{|\det \varphi_{xx}(x_0)|^{1/2}} \Big( a(x_0) + \sum_{j=1}^{N} a_j(x_0)\lambda^{-j} \Big) \nonumber \\
    & \quad + \mathcal O \big( \lambda^{-\frac{n}{2}-N-1} \times \sum_{|\alpha| \leq n+2N+3} \sup_{\Rn} |\partial^\alpha a| \times \sum_{|\alpha| \leq n+2N+6} \sup_{\Rn} |\partial^\alpha \varphi| \big), \tag{\ref{eq:I-staphlema}} 
    \end{empheq}
    \end{subequations}
    for some functions $a_j, ~1 \leq j \leq N$.
    %The integer $N$ can be arbitrary.
\end{lem}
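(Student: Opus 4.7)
The plan is to reduce Lemma~\ref{lem:1-staphlema} to the quadratic-phase result Proposition~\ref{prop:2-staphlema} by flattening the phase with the Morse lemma, after first isolating the critical point from the non-stationary region with a cutoff.

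I would pick $\chi \in C_c^\infty(\Rn)$ equal to $1$ on $B(x_0, r/2)$ and supported in $B(x_0, r)$, with $r$ small enough for the Morse lemma to apply on $B(x_0, r)$. Writing $I = I_1 + I_2$ with $I_1$ the piece carrying $\chi a$ and $I_2$ the piece carrying $(1-\chi)a$, the second piece is handled by repeated integration by parts against the transpose of $L = (i\lambda |\nabla \varphi|^2)^{-1}\,\nabla\varphi \cdot \nabla$, which satisfies $L\,e^{i\lambda\varphi} = e^{i\lambda\varphi}$ on $\supp(1-\chi)$; integrating $k \geq n/2 + N + 1$ times yields $I_2(\lambda) = \mathcal O(\lambda^{-n/2-N-1})$ with constant bounded by finitely many sup-norms of derivatives of $a$ and $\varphi$.

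For $I_1$, the Morse lemma in its Hessian-preserving form produces a diffeomorphism $\psi$ on $B(x_0, r)$ with $\psi(x_0) = 0$, $D\psi(x_0) = \mathrm{Id}$, and $\varphi(\psi^{-1}(y)) - \varphi(x_0) = \tfrac12 \agl[Qy, y]$, where $Q := \varphi_{xx}(x_0)$. Substituting $x = \psi^{-1}(y)$ transforms $I_1$ into
\[
I_1(\lambda) = e^{i\lambda\varphi(x_0)} \int_{\Rn} e^{i\lambda \agl[Qy, y]/2}\, \tilde a(y;\lambda) \dif{y}, \qquad \tilde a(y;\lambda) := \chi(\psi^{-1}(y))\, a(\psi^{-1}(y))\, |\det D\psi^{-1}(y)|,
\]
whose amplitude is compactly supported and of the required smoothness, so Proposition~\ref{prop:2-staphlema} applies directly. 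The leading term is $a(x_0) \cdot (2\pi/\lambda)^{n/2} e^{i\pi \sgn Q/4}/|\det Q|^{1/2}$, because $D\psi(x_0) = \mathrm{Id}$ forces $\tilde a(0;\lambda) = a(x_0)$; the subleading coefficients $a_j(x_0)$ are obtained by unfolding $(\agl[Q^{-1}D, D]/(2i))^j \tilde a / j!$ at $y = 0$.

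The main obstacle is the derivative bookkeeping: to match the error term in~\eqref{eq:I-staphlema}, I must bound $\sup|\partial^\alpha \tilde a|$ for $|\alpha| \leq n+2N+3$ by $\sup|\partial^\beta a|$ for $|\beta| \leq n+2N+3$ and $\sup|\partial^\gamma \varphi|$ for $|\gamma| \leq n+2N+6$. This reduces to a Fa\`a di Bruno chain-rule computation applied to $a \circ \psi^{-1}$ together with $|\det D\psi^{-1}|$; the three-derivative gap is precisely the cost of constructing $\psi^{-1}$ from $\varphi$ (two derivatives lost in the Morse homotopy plus one from differentiating the Jacobian). A secondary care-point is that $a$ is not assumed compactly supported, so $I_2$ is strictly an oscillatory integral and the non-stationary estimate requires a quantitative lower bound for $|\nabla\varphi|$ on $\supp(1-\chi)$, which should either be read as a tacit hypothesis or extracted via a further partition of unity with a dyadic decomposition of the complement of $B(x_0, r/2)$.
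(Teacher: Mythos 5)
Your proposal follows essentially the same route as the paper: cut off near the critical point, kill the non-stationary piece by integrating by parts with $L = (i\lambda|\nabla\varphi|^2)^{-1}\nabla\varphi\cdot\nabla$, flatten the phase near $x_0$ with a Morse-type change of variables, and apply the quadratic-phase lemma, with the same leading-term computation via $|\det D\psi^{-1}(0)|=1$ and the same derivative bookkeeping accounting for the loss from $\varphi$ to the diffeomorphism. The only cosmetic difference is normalization --- you keep $Q=\varphi_{xx}(x_0)$ with $D\psi(x_0)=\mathrm{Id}$, whereas the paper diagonalizes the averaged Hessian and lands on $\Lambda=\mathrm{diag}(\alpha_j(0))$ with $D\Phi(0)$ orthogonal --- and your flagged concern about the uniform lower bound for $|\nabla\varphi|$ on $\supp a\setminus B(x_0,r)$ is legitimate (the paper simply asserts it).
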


\begin{rem}
	Proposition \ref{prop:2-staphlema} is a special case of Lemma \ref{lem:1-staphlema} where $\varphi(x) = \agl[Q(x - x_0), x - x_0]/2$, which guarantees $\varphi_{xx}(x_0) = Q$.
	Lemma \ref{lem:1-staphlema} is not a generalization of Lemma \ref{lem:2-staphlema} because unlike the quadratic phase function in Lemma \ref{lem:2-staphlema}, the phase function $\varphi$ in Lemma \ref{lem:1-staphlema} is not assumed to possess the property that $|\nabla \varphi(x)| \simeq |x|$ as $|x|$ is large.
	However if $\varphi$ is a homeomorphism of $\Rn$, it is possible to generalize Lemma \ref{lem:2-staphlema}.
\end{rem}

We can compute these $a_j(x_0)$ in a more precise way (cf.~\cite[(3.4.11)]{zw2012semi}), and the details are given in \cite[Second proof of Theorem 3.11]{zw2012semi}.
In \cite[Lemma 19.3]{eskin2011lectures} there is also another routine to prove the stationary phase lemma.
\cite[Chapter 5]{dim1999spe} by Dimassi and Sj\"ostrand is also a good reference.

\section{Preliminaries}

We need the Taylor's expansion.
Suppose $f \in C^{N+1}(\Rn; \mathbb C)$, then we have that
\begin{equation} \label{eq:TaylorInt-staphlema}
f(x) = \sum_{|\delta| \leq N} \frac{1}{\delta!} \big( \partial^{\delta}f \big)(x_0) \cdot (x - x_0)^{\delta} + (N+1) \sum_{|\delta| = N+1} \frac{(x - x_0)^{\delta}}{\delta!} \int_0^1 (1-t)^N \big( \partial^{\delta}f \big)(x_0 + t(x-x_0)) \dif{t}.
\end{equation}
We omit the proof.

\smallskip

For a measurable function $u$, there exists a constant $C$ such that $\hat u$ exists and
\begin{equation} \label{eq:Fuu-staphlema}
	\nrm[L^1(\Rn)]{\hat u} \leq C \sum_{|\alpha| \leq n + 1} \nrm[L^1(\Rn)]{\partial^\alpha u},
\end{equation}
as long as the right-hand-side of \eqref{eq:Fuu-staphlema} is defined.

\begin{proof}
	We have
	\begin{align*}
		\int |\hat u(\xi)| \dif \xi
		& \leq \int \agl[\xi]^{-n-1} |\agl[\xi]^{n+1} \hat u(\xi)| \dif \xi
		\leq C \sup_{\Rn} |\agl[\xi]^{n+1} \hat u(\xi)| \\
		& \leq C \sum_{|\alpha| \leq n + 1} C_\alpha \sup_{\Rn} |\xi^\alpha \hat u(\xi)|
		\leq \sum_{|\alpha| \leq n + 1} C_\alpha \sup_{\Rn} |(\partial^\alpha u)^\wedge(\xi)| \\
		& \leq \sum_{|\alpha| \leq n + 1} C_\alpha \nrm[L^1(\Rn)]{\partial^\alpha u}.
	\end{align*}
	We arrive at the conclusion.
\end{proof}

\smallskip

We also need the following transformation.
For a fixed non-degenerate, symmetric, real-valued square matrix $Q$, we have
\begin{equation} \label{eq:FrGauiQ-staphlema}
\calF \{ e^{\pm i\,\agl[Q \cdot, \cdot]/2} \}(\xi) = \frac {e^{\pm i \frac \pi 4 {\rm sgn}\, Q}} {|\det Q|^{1/2}} e^{\mp i \agl[Q^{-1} \xi,\xi]/2}.
\end{equation}

\section{Proof of the lemma} \label{sec:pof-staphlema}

We first prove the quadratic case.

\begin{proof}[Proof of Lemma \ref{lem:2-staphlema}]
	
	We omit notationally the dependence of $a$ on $\lambda$ until related clarifications are needed.
	Without loss of generality we assume $x_0 = 0$, and $a(0) = 1$.
	For readers' convenient we rewrite the expression of $I$ here: $I(\lambda) = \int_{\Rn} e^{i\lambda \agl[Qx, x]/2} a(x) \dif{x}$.
	
%	We basically modify and improve the proof of Lemma \ref{lem:1-staphlema}.

	According to the assumption on $Q$, we know there exists a decomposition $Q = P \Lambda P^T$ where $P$ is an orthogonal matrix and $\Lambda := (\alpha_j)_{j=1,\cdots,n}$ is a diagonal matrix.
	Make the change of variable $y = P^T x$, we can have
	\begin{align}
	I
	& = \int_\Rn a(Py) e^{i\lambda \sum_{j=1}^{n} \alpha_j y_j^2/2} \dif y \nonumber \\
	& = \int_\Rn e^{i\lambda \sum_{j=1}^{n} \alpha_j y_j^2/2} (1-\chi(y)) f(y) \dif y + \int_\Rn e^{i\lambda \sum_{j=1}^{n} \alpha_j y_j^2/2} \chi(y) f(y) \dif y \qquad (\det P = 1) \nonumber \\
	& =: J_1 + J_2, \label{eq:IQJ12-staphlema}
	\end{align}
	where $f(y) := a(Py)$ and $\chi \in C_c^\infty(\Rn)$ is a cutoff function satisfying $\chi \equiv 1$ in a neighborhood of the origin.
	
	Noticing that a neighborhood of the origin is not included in the support of the integrand in $J_1$, we can estimate $J_1$ by using integration by parts (in the oscillatory integral sense).
	For any integer $M \in \N$ we have
	\begin{align}
	|J_1|
	& = |\int_\Rn \big( \frac {\sum_j \alpha_j^{-1} y_j \partial_j} {i\lambda |y|^2} \big)^M (e^{i\lambda \sum_{j=1}^{n}\alpha_j y_j^2/2} ) \big[ (1-\chi(y)) f(y) \big] \dif y| \nonumber \\
	& \lesssim \lambda^{-M} \int_\Rn |\big( \sum_j \partial_j \circ (y_j |y|^{-2}) \big)^M \big[ (1-\chi(y)) f(y) \big] | \dif y \nonumber \\
	& \lesssim \lambda^{-M} \int_{\supp (1-\chi)} \sum_{|\alpha| \leq M} C_{M; \alpha} |y|^{|\alpha|-2M} |\partial^\alpha ((1-\chi) f(y))| \dif y \label{eq:J1Ind-staphlema} \\
	& \lesssim \lambda^{-M} \big( C \sum_{|\alpha| \leq M} \sup_{\{0 < \chi < 1\}} |\partial^\alpha f| + \sum_{|\alpha| \leq M} C_{M; \alpha} \int_{\{\chi = 0\}} |y|^{|\alpha|-2M} |\partial^\alpha f(y)| \dif y \big) \label{eq:J1Ine-staphlema} \\
	& \lesssim \lambda^{-M} \sum_{|\alpha| \leq M} \big( \sup_{\{0 < \chi < 1\}} |\partial^\alpha a| + \int_{\{\chi = 0\}} |y|^{-n-1} \agl[y]^{|\alpha|-2M+n+1} |\partial^\alpha a(y)| \dif y \big) \nonumber \\
	& \lesssim \lambda^{-M} \sum_{|\alpha| \leq M} \big( \sup_{\{0 < \chi < 1\}} |\partial^\alpha a| + \sup_{\Rn} \frac {|\partial^\alpha a(y)|} {\agl[y]^{2M-n-1-|\alpha|} } \big) \nonumber \\
	& \lesssim \lambda^{-M} \sum_{|\alpha| \leq M} \sup_{\Rn} \frac {|\partial^\alpha a(y)|} {\agl[y]^{2M-n-1-|\alpha|}}. \label{eq:J1f-staphlema}
	\end{align}
	The inequality \eqref{eq:J1Ind-staphlema} is due to the fact that
	\[
	\big( \sum_j \partial_j \circ (y_j |y|^{-2}) \big)^M \varphi = \sum_{|\alpha| \leq M} C_{M; \alpha} |y|^{|\alpha|-2M} \partial^\alpha \varphi
	\]
	which can be derived by induction and we omit the details.
	Inequality \eqref{eq:J1Ine-staphlema} is due to the fact that $\partial^\alpha ((1-\chi) f) = \partial^\alpha f$ in $\{\chi = 0\}$.
%	Set $m = \lceil n/2 \rceil + N + 1$, we can continue \eqref{eq:J1Ine-staphlema} as
%	\begin{align}
%	|J_1|
%	& \lesssim \lambda^{-\lceil n/2 \rceil - N - 1} \big( C + \sum_{|\alpha| \leq \lceil n/2 \rceil + N + 1} C_{\lceil n/2 \rceil + N + 1; \alpha} \int_{\{\chi = 0\}} |y|^{-n-1} |y|^{|\alpha|-2N-1} \partial^\alpha f(y) \dif y \big) \nonumber \\
%	& \lesssim \lambda^{-\lceil n/2 \rceil-N-1} [C+ \sum_{|\alpha| \leq \lceil n/2 \rceil + N + 1} \sup_{\Rn \backslash \supp \chi} (|y|^{|\alpha|-2N-1} |\partial^\alpha f|)]. %\label{eq:J1f-staphlema}
%	\end{align}

	Then we turn to $J_2$.
	Keep in mind that $f(y) = a(Py)$ and $\chi f$ is compactly support and in $C_c^{n+2N+3}(\Rn)$.
	Here we analyze $I$ by borrowing idea from \cite[First proof of Theorem 3.11]{zw2012semi}.
	First we use Plancherel theorem (which states $(f,g) = (\hat f, \hat g)$),
	\begin{align}
	J_2
	& = \int_\Rn \overline{ e^{-i\lambda \sum_{j=1}^{n} \alpha_j y_j^2/2} } \chi f(y) \dif y
	= \int_\Rn \overline{ \calF \{ e^{-i\lambda \sum_{j=1}^{n} \alpha_j (\cdot)^2/2} \}(\xi) } \cdot \widehat{\chi f}(\xi) \dif \xi \nonumber \\
	& = \int_\Rn \overline{ (\lambda)^{-n/2} \frac {e^{-\frac {i\pi} 4 {\rm sgn}\, Q}} {|\det Q|^{1/2}} e^{\frac i {2\lambda} \alpha_j^{-1} \xi_j^2} } \cdot\widehat{\chi f}(\xi) \dif \xi \qquad (\text{by~} \eqref{eq:FrGauiQ-staphlema})\nonumber \\
	& =: \big( \frac {2\pi} \lambda \big)^{n/2} \frac {e^{\frac {i\pi} 4 {\rm sgn}\, Q}} {|\det Q|^{1/2}} J(1/\lambda, 1/\lambda, \chi f), \label{eq:I1J-staphlema}
	\end{align}
	where we ignored the summation notation over $j$ and the function $J$ is defined by
	\begin{equation} \label{eq:Jh-staphlema}
	J(h_1, h_2, \chi f) := (2\pi)^{-n/2} \int_\Rn e^{\frac {\xi_j^2 h_1} {i2\alpha_j}} \cdot \widehat{\chi f}(\xi; 1/h_2) \dif \xi.
	\end{equation}
	Note that in \eqref{eq:Jh-staphlema} we put emphasize on the dependence of $f$ on $h_2$ (i.e.~dependence of $a$ on $\lambda$).
	The smoothness of $J$ w.r.t.~$h_1$ is guaranteed by the $L^1$ of derivatives of $f$, namely, we have the following claim whose justification will be clear in \eqref{eq:JhR-staphlema},
	\[
	\forall m \in \mathbb N,\, \max_{|\alpha| \leq n + 2m + 1} \nrm[L^1(\Rn)]{\partial^\alpha f} < +\infty \ \Rightarrow \ J(\cdot, h_2, f) \in C^m(\R).
	\]
	
	We abbreviate $\partial_{h_1} J$ as $\partial_1 J$.
	Note that
	\begin{align*}
	\partial_1^k J(0, h_2, \chi f)
	& = (2\pi)^{-n/2} \int_\Rn \partial_1^{k} (e^{\frac {\xi_j^2 h_1} {i2\alpha_j}}) |_{h_1 = 0} \cdot \widehat{\chi f}(\xi) \dif \xi
	= (2\pi)^{-n/2} \int_\Rn (\sum_j \frac {\xi_j^2} {i2\alpha_j})^k \cdot \widehat{\chi f}(\xi) \dif \xi \\
	& = (2\pi)^{-n/2} \int_\Rn \widehat{T^k \chi f}(\xi) \dif \xi
	= T^k f(0)
	= T^k \big( a(Py) \big) |_{y = 0} \\
	& = (\frac i 2 P^{lj} P^{kj} \alpha_j^{-1} \partial_{kl})^k a(0)
	= A^k a(0; 1/h_2),
	\end{align*}
	where $T = \frac i 2 \sum_j \frac {\partial_j^2} {\alpha_j}$ and $A = \frac i 2 (Q^{-1})^{jl} \partial_{jl} = \frac 1 {2i} \agl[Q^{-1}D, D]$ (recall that $D = \frac 1 i \nabla$ is vertical).
	We expand $J$ using Taylor series (i.e.~\eqref{eq:TaylorInt-staphlema}),
	\begin{align}
	J(h, h_2, \chi f)
	& = \sum_{k\leq N} \frac{h^k}{k!} \partial_h^{k}J(0, h_2, \chi f) + \frac{h^{N+1}} {N!} \int_0^1 (1-t)^{N} \cdot \partial_1^{N+1}J(th, h_2, \chi f) \dif{t} \nonumber \\
	& = \sum_{0 \leq k \leq N} \frac{(hA)^k}{k!} a(0; h_2) + \frac{h^{N+1}} {N!} \int_0^1 (1-t)^{N} \cdot \partial_1^{N+1}J(th, h_2, \chi f) \dif{t}. \label{eq:JhExp-staphlema}
	\end{align}
	By invoking \eqref{eq:Jh-staphlema}, the remainder term in \eqref{eq:JhExp-staphlema} can be estimated as
	\begin{align*}
	& \ |\frac{h^{N+1}}{N!} \int_0^1 (1-t)^{N} \cdot \partial_1^{N+1}J(th, h_2, \chi f) \dif{t}| \\
	\leq & \ C_N h^{N+1} \int_\Rn |(\frac {-i} {4\alpha_j} \xi_j^2)^{N+1} \cdot \widehat{\chi f}(\xi; 1/h_2)| \dif \xi
	\leq C_N h^{N+1} \sum_{|\beta| \leq 2N+2} C_\beta \nrm[L^1(\Rn)]{(\partial_x^\beta (\chi f(\cdot; 1/h_2)))^\wedge}.
	\end{align*}
	By using \eqref{eq:Jh-staphlema} and \eqref{eq:Fuu-staphlema}, we can continue
	\begin{align}
	& \ |\frac{h^{N+1}}{N!} \int_0^1 (1-t)^{N} \cdot \partial_1^{N+1}J(th, h_2, \chi f) \dif{t}| \nonumber \\
	\leq & \ C_N h^{N+1} \sum_{\substack{|\alpha| \leq n+1 \\ |\beta| \leq 2N + 2}} \nrm[L^1(\Rn)]{\partial_x^{\alpha + \beta} (\chi f(\cdot; 1/h_2))}
	\leq C_N h^{N+1} \sum_{|\alpha| \leq n+ 2N + 3} \nrm[L^1(\Rn)]{\partial_x^{\alpha} (\chi f(\cdot; 1/h_2))} \nonumber \\
	\leq & \ C_N h^{N+1} \sum_{|\alpha| \leq n+2N+3} \sup_{\supp \chi} |\partial^\alpha a(\cdot; 1/h_2)|. \label{eq:JhR-staphlema}
	\end{align}
	Letting $h = h_2 = 1/\lambda$ and combining \eqref{eq:I1J-staphlema}, \eqref{eq:Jh-staphlema}, \eqref{eq:JhExp-staphlema} and \eqref{eq:JhR-staphlema}, we obtain
	\begin{align}
	J_2
	& =  \left( \frac {2\pi} \lambda \right)^{n/2} \frac {e^{\frac {i\pi} 4 {\rm sgn}\, Q}} {|\det Q|^{1/2}} \sum_{j \leq N} \frac {\lambda^{-j}} {j!} \left( \frac {\agl[Q^{-1}D, D]} {2i} \right)^j a(0; \lambda) + C_N \lambda^{-\frac n 2-N-1} \sum_{|\alpha| \leq n+2N+3} \sup_{\supp \chi} |\partial^\alpha a(\cdot; \lambda)|. \label{eq:J2t-staphlema}
	\end{align}

	Combining \eqref{eq:J2t-staphlema} with \eqref{eq:IQJ12-staphlema}, \eqref{eq:J1f-staphlema}, we have
	\begin{align*}
	I
	& = \left( \frac {2\pi} \lambda \right)^{n/2} \frac {e^{\frac {i\pi} 4 {\rm sgn}\, Q}} {|\det Q|^{1/2}} \sum_{j \leq N} \frac {\lambda^{-j}} {j!} \left( \frac {\agl[Q^{-1}D, D]} {2i} \right)^j a(0; \lambda) \\
	& \qquad + \mathcal O \big( \lambda^{-\frac n 2-N-1} \sum_{|\alpha| \leq n+2N+3} \sup_{\supp \chi} |\partial^\alpha a(\cdot; \lambda)| \big) + \mathcal O \big( \lambda^{-M} \sum_{|\alpha| \leq M} \sup_{\Rn} \frac {|\partial^\alpha a(y; \lambda)|} {\agl[y]^{2M-n-1-|\alpha|}} \big),
	\end{align*}
	which is \eqref{eq:IQ-staphlema}.
	
	By setting $M$ to be $\max\{ \lceil n/2 \rceil + N + 1, n+1\} \leq M \leq n + 2N + 3$, we can have $-M \leq -n/2 - N - 1$, $M \leq n + 2N + 3$ and $2M - n - 1 - |\alpha| \geq 0$ provided $|\alpha| \leq M$, so we can obtain \eqref{eq:IQs-staphlema} from \eqref{eq:IQ-staphlema}.
	
	The proof is complete.
\end{proof}

Now we prove the more general case.

\begin{proof}[Proof of Lemma \ref{lem:1-staphlema}]
	Without loss of generality we assume $x_0 = 0$, $\varphi(0) = 0$ and $a(0) = 1$.
	Hence by Taylor's expansion \eqref{eq:TaylorInt-staphlema} we have 
	\[
	\varphi(x) 
	= \sum_{j,k \leq n} x_j x_k \int_0^1 (1-t)\, \partial_{jk} \varphi(tx) \dif t
	= x^T \cdot\int_0^1 (1-t) \varphi_{xx}(tx) \dif t \cdot x.
	\]
	Note that $|\varphi_{xx}(0)| \neq 0$ and $|\varphi_{xx}(x)|$ is continuous on $x$ (\underline{$\varphi \in C^2$}), thus there exists a positive constant \boxed{r} such that $|\varphi_{xx}(x)| > |\varphi_{xx}(0)|/2 > 0$ for all $x \in B(0,r)$.
	Fix a cutoff function $\chi \in C_c^\infty(\Rn)$ such that $\supp \chi \subset B(0,r)$ and $\chi \equiv 1$ in $B(0,r/2)$.
	Hence on $B(0,r)$ the matrix $\varphi_{xx}$ is non-degenerate, and on $\supp a \backslash B(0,r)$ the norm of gradient is uniformly bounded from below, i.e.~there exists a constant $C > 0$ such that $|\nabla \varphi(x)| \geq C$ for $\forall x \in \supp a \backslash B(0,r)$.
	We divide $I$ into two parts
	\begin{equation} \label{eq:ISplit-staphlema}
	I(\lambda)
	= \int_{\Rn} (1 - \chi(x)) a(x) e^{i\lambda \varphi(x)} \dif x + \int_{\Rn} \chi(x) a(x) e^{i\lambda \varphi(x)} \dif x
	:= I_1 + I_2,
	\end{equation}
	and we will show that $I_1$ is rapidly decreasing w.r.t.~$\lambda$ while $I_2$ can be analyzed by using Lemma \eqref{lem:2-staphlema}.

	For $I_1$, denote
	\(
	L = \sum_{j=1}^n \frac {\varphi_{x_j}} {|\nabla \varphi|^2} \partial x_j,
	\)
	where $\varphi_{x_j}$ is short for $\partial_{x_j} \varphi$. 
	Then $\frac 1 {i\lambda} L e^{i\lambda \varphi} = e^{i\lambda\varphi} \text{ and } {}^t L f = \sum_{j=1}^n \partial_{x_j} \big( \frac {\varphi_{x_j} f} {|\nabla \varphi|^2} \big)$.
	For any $N \in \mathbb{N}^+$, $I_2$ can be easily estimated as follows (which requires 
	$a \in C^{n+N+1}(\Rn)$ and $\varphi \in C^{n+N+2}(\Rn)$)
	\begin{align}
	I_1
	& = \int_\Rn (1-\chi) a \cdot ((i\lambda)^{-n-N-1} L^{N+1} e^{i\lambda \varphi(x)}) \dif{x}
	= (i\lambda)^{-n-N-1} \int_\Rn ({}^t L)^{n+N+1} ((1-\chi)a) \cdot e^{i\lambda \varphi(x)} \dif{x} \nonumber\\
	& = \mathcal{O} (\lambda^{-n-N-1} \sum_{|\alpha| \leq n+N+1} \nrm[L^1(\Rn)]{\partial^\alpha a}), \quad \lambda \to \infty \label{eq:I2-staphlema}.
	\end{align}
	As mentioned before, due to the presence of $1-\chi$, the denominator $|\nabla \varphi|^2$ in $L$ keeps a positive distance away from 0, guaranteeing that $({}^t L)^N ((1-\chi)a)$ is bounded and compactly supported.

	Now we turn to $I_2$.
	Because $\varphi \in C^2(\Rn)$, $\varphi_{xx}(x)$ is symmetric and thus there exist orthogonal matrix $P(x)$ and diagonal matrix $\Lambda(x) = (\alpha_j(x))_{j=1,\cdots,n}$ such that 
	\begin{equation*} %\label{eq:TayRem-staphlema}
	2 \int_0^1 (1-t) \varphi_{xx}(tx) \dif t = P(x) \Lambda(x) P^T(x).
	\end{equation*}
	Especially we have
	\( %\label{eq:LamdaEq-staphlema}
	P(0) \Lambda(0) P^T(0) = \varphi_{xx}(0).
	\)
	Denote $\alpha_j = \alpha_j(0)$ and $n \times n$ matrix $\Lambda := (\alpha_j)_{j=1,\cdots,n}$ for short. Thus
	\begin{equation*} %\label{eq:Lambda-staphlema}
	\Lambda(x) = (\sqrt{\frac {\alpha_j(x)} {\alpha_j}})_{j=1,\cdots,n} \cdot \Lambda \cdot (\sqrt{\frac {\alpha_j(x)} {\alpha_j}})_{j=1,\cdots,n}.
	\end{equation*}
	Make the change of variable:
	\begin{equation} \label{eq:xToy-staphlema}
	y = \Phi(x) := \left( \sqrt{\frac {\alpha_j(x)} {\alpha_j}} \right)_{j=1,\cdots,n} \cdot P^T(x) \cdot x.
	\end{equation}
	Note that
	\begin{equation} \label{eq:sm1-staphlema}
	\varphi \in C^{n+2N+6} \Rightarrow \Phi \in C^{n+2N+4}.
	\end{equation}
	We have
	\begin{align*}
	\varphi(x)
	& = \frac 1 2 x^T \cdot \big[ 2 \int_0^1 (1-t) \varphi_{xx}(tx) \dif t \big] \cdot x
	= \frac 1 2 x^T \cdot \big[ P(x) \Lambda(x) P^T(x) \big] \cdot x \\
	& = \frac 1 2 [P^T(x) \cdot x]^T \cdot (\sqrt{\frac {\alpha_j(x)} {\alpha_j}})_{j=1,\cdots,n} \cdot \Lambda \cdot (\sqrt{\frac {\alpha_j(x)} {\alpha_j}})_{j=1,\cdots,n} \cdot [P^T(x) \cdot x] \\
	& = \frac 1 2 \big[ \big( \sqrt{\frac {\alpha_j(x)} {\alpha_j}} \big)_{j=1,\cdots,n} \cdot P^T(x) \cdot x \big]^T \cdot \Lambda \cdot \big[ \big( \sqrt{\frac {\alpha_j(x)} {\alpha_j}} \big)_{j=1,\cdots,n} \cdot P^T(x) \cdot x \big] \\
	& = \frac 1 2 \agl[\Lambda y, y].
	\end{align*}
	
	We have $\Phi(0) = 0$. It is easy to check that $\frac {\partial \Phi} {\partial x} (0) = P^T(0).$
	From \eqref{eq:xToy-staphlema} it is clear that there exists inverse of $\Phi$, i.e.~$\phi = \Phi^{-1}$
	Note that $x = \phi(\Phi(x))$ and
	\begin{equation} \label{eq:sm2-staphlema}
	\Phi \in C^{n+2N+4} \Rightarrow \phi \in C^{n+2N+4}.
	\end{equation}
	We have
	\begin{align*}
	I_1
	& = \int_\Rn \chi(\phi(y)) a(\phi(y)) \cdot e^{i\lambda \agl[\Lambda y, y]/2} \dif{\phi(y)} \nonumber\\
	& = \int_\Rn \chi(\phi(y)) a(\phi(y)) |\det \nabla_y \phi (y)| \cdot e^{i\lambda \agl[\Lambda y, y]/2} \dif{y} \nonumber\\
	& = \int_\Rn f(y) e^{i\lambda \agl[\Lambda y, y]/2} \dif{y} %\label{eq:I1Inter1-staphlema}.
	\end{align*}
	where
	\begin{equation*} %\label{eq:fphi-staphlema}
	f(y) = \chi(\phi(y)) \cdot a(\phi(y)) \cdot |\det \nabla_y \phi (y)|,
	\end{equation*}
	Note that
	\begin{equation} \label{eq:sm3-staphlema}
	\phi \in C^{n+2N+4},\, a \in C^{n+2N+2} \Rightarrow f \in C^{n+2N+2}.
	\end{equation}
	Now we can conclude from \eqref{eq:sm3-staphlema}, \eqref{eq:sm3-staphlema} and \eqref{eq:sm3-staphlema} that
	\begin{equation} \label{eq:sm4-staphlema}
	\varphi \in C^{n+2N+6},\, a \in C^{n+2N+3} \Rightarrow f \in C^{n+2N+3}.
	\end{equation}
	By using Lemma \ref{lem:2-staphlema}, we can obtain
	\begin{align}
	I_1(\lambda)
	& = \left( \frac{2\pi}{\lambda} \right)^{n/2} \frac{e^{i\frac{\pi}{4}\sgn \Lambda}} {|\det \Lambda|^{1/2}} \sum_{0 \leq j \leq N} \frac {\lambda^{-j}} {j!} \left( \frac {\agl[\Lambda^{-1}D, D]} {2i} \right)^j f(0) \nonumber \\
	& \qquad + \mathcal O(\lambda^{-\frac{n}{2}-N-1} \times \sum_{|\alpha| \leq n+2N+3} \sup_{\Rn} |\partial^\alpha f|) \nonumber \\
	& = \left( \frac{2\pi}{\lambda} \right)^{n/2} \frac{e^{i\frac{\pi}{4}\sgn \Lambda}} {|\det \Lambda|^{1/2}} \sum_{0 \leq j \leq N} \frac {\lambda^{-j}} {j!} \left( \frac {\agl[\Lambda^{-1}D, D]} {2i} \right)^j f(0) \nonumber \\
	& \qquad + \mathcal O(\lambda^{-\frac{n}{2}-N-1} \times \sum_{|\alpha| \leq n+2N+3} \sup_{\Rn} |\partial^\alpha a| \times \sum_{|\alpha| \leq n+2N+6} \sup_{\Rn} |\partial^\alpha \varphi|). \label{eq:I1Tem-staphlema}
	\end{align}
	%for some function $a_j$ where $a_0(0) = a(0)$.
	It can be checked that $\sgn \Lambda = \sgn \varphi_{xx}(0)$ and $\det \Lambda = \det \varphi_{xx}(0)$.
	
	We are now almost arrive at \eqref{eq:I-staphlema} except for the explicit computation of the leading term in \eqref{eq:I-staphlema} and \eqref{eq:I1Tem-staphlema}.
	From equality $x = \phi(\Phi(x))$ we know $I = \nabla_y \phi(\Phi(x)) \cdot \nabla_x \Phi(x)$.
	Formula \eqref{eq:xToy-staphlema} implies $\Phi(0) = 0$ and $\nabla_x \Phi(0) = P^T(0)$,
	hence $\det \nabla_y \phi(0) = \det \nabla_y \phi(\Phi(0)) = \big( \det \nabla_x \Phi(0) \big)^{-1} = \big( \det P^T(0) \big)^{-1} = 1$.
	Therefore,
	\begin{equation} \label{eq:fToa-staphlema}
	f(0) = \chi(\phi(0)) \cdot a(\phi(0)) \cdot |\det \nabla_y \phi (0)|
	= \chi(0) \cdot a(0) = a(0).
	\end{equation}
	Combining \eqref{eq:ISplit-staphlema}, \eqref{eq:I2-staphlema}, \eqref{eq:I1Tem-staphlema} and \eqref{eq:fToa-staphlema}, we arrive at the conclusion.
\end{proof}

%\begin{figure}[h]
%	\centering
%	\includegraphics[scale=0.25]{images/maps.jpg}
%	\caption{The geometric illustration}
%	\label{fig:geo-staphlema}
%\end{figure}

%\bibliographystyle{unsrt}
%\bibliography{C:/Users/mashi/Dropbox/msq_myreference}

% \bib, bibdiv, biblist are defined by the amsrefs package.
\begin{bibdiv}
\begin{biblist}

\bib{dim1999spe}{book}{
      author={Dimassi, Mouez},
      author={Sj\"{o}strand, Johannes},
       title={Spectral asymptotics in the semi-classical limit},
      series={London Mathematical Society Lecture Note Series},
   publisher={Cambridge University Press, Cambridge},
        date={1999},
      volume={268},
        ISBN={0-521-66544-2},
         url={https://doi.org/10.1017/CBO9780511662195},
      review={\MR{1735654}},
}

\bib{eskin2011lectures}{book}{
      author={Eskin, Gregory},
       title={Lectures on linear partial differential equations},
   publisher={American Mathematical Society},
        date={2011},
      volume={123},
}

\bib{zw2012semi}{book}{
      author={Zworski, Maciej},
       title={Semiclassical analysis},
      series={Graduate Studies in Mathematics},
   publisher={American Mathematical Society, Providence, RI},
        date={2012},
      volume={138},
        ISBN={978-0-8218-8320-4},
         url={https://doi.org/10.1090/gsm/138},
      review={\MR{2952218}},
}

\end{biblist}
\end{bibdiv}

\end{document}